\title{Exploring  a Delta Schur  Conjecture}
\author{Adriano Garsia}
\address{Department of Mathematics, University of California, San Diego, La Jolla, CA, USA}
\email{garsia@math.ucsd.edu}
\author{Jeffrey Liese}
\address{
Department of Mathematics, California Polytechnic State University, San Luis Obispo, CA, USA}
\email{jliese@calpoly.edu}
\author{Jeffrey B. Remmel}
\address{
Department of Mathematics, University of California, San Diego, La Jolla, CA, USA}
\email{jremmel@ucsd.edu}
\author{Meesue Yoo}
\address{
Applied Algebra and Optimization Research Center, Sungkyunkwan University, Suwon,
South Korea}
\email{meesue.yoo@skku.edu}
\date{\today}
\thanks{The first named author was supported by NSF grant DMS1700233.
 The fourth author was supported by NRF grants $2016R1A5A1008055$ and $2017R1C1B2005653$.
}
\keywords{Delta Conjecture, Macdonald polynomials}
\subjclass[2010]{Primary:  ; Secondary: 05A}
\date{\today}
\newtheorem{thm}{Theorem}[section]
\newtheorem{prop}[thm]{Proposition}
\newtheorem{cor}[thm]{Corollary}
\theoremstyle{definition}
\newtheorem{remark}[thm]{Remark}
\def\multi#1{\vbox{\baselineskip=0pt\halign{\hfil$\scriptstyle\vphantom{(_)}##$\hfil\cr#1\crcr}}}
\def \o1  {{\overline }}
\def \l {{\ell}}
\def \bu {\hskip -.15in}
\def\tttt #1{{\textstyle{#1} }}
\def \uu {\hskip -.08in}
\def \magstep#1 {\ifcase#1 1000\or 1200\or 1440\or 1728\or 2074\or 2488\fi\relax}
\def\la{{\lambda}}
\font\title=cmbx10 scaled\magstep2
\def \-> {\rightarrow}
\def\LL{\big\langle}
\def\RR {\big\rangle}
\def\DD {\Delta}
\def\om {\omega}
\def\la {\lambda}
\def \RA {\rightarrow}
\def \sas {\vskip .06truein}
\def\sa{{\vskip .125truein}}
\def \ses {\enskip = \enskip}
\def \sps {\, + \,}
\def \scs {\, , \,}
\def \ess {\enskip}
\def \ssp {\hskip .25em}
\def \bigsp {\hskip .5truein}
\def \part {\vdash}
\def \DD {\Delta}
\def \RA {{ \rightarrow }}
\def \om {\omega}
\def \TH {{\widetilde H}}
\def \om {\omega}
\def \TH {{\widetilde H}}
\def \scs {\ssp , \ssp}
\def \ess {\enskip}
\def \ssp {\hskip .25em}
\def \bigsp {\hskip .5truein}
\def \part {\vdash}
\newcommand{\qbin}[3]{\genfrac{[}{]}{0pt}{}{#1}{#2}_{#3}}
\begin{document}

\begin{abstract}
 In \cite{HRW15}, Haglund, Remmel, Wilson state a conjecture which predicts
a purely combinatorial way of obtaining the symmetric function $\Delta_{e_k}e_n$. It is  called the Delta Conjecture. It was recently proved in \cite{GHRY} that the Delta Conjecture is true when either $q=0$ or $t=0$.
In this paper we complete a work initiated by Remmel whose initial aim was to explore the symmetric function
$\Delta_{s_\nu} e_n$ by the same methods developed in
\cite{GHRY}. Our first need here is a method for constructing a symmetric function that may be viewed as a ``combinatorial side'' for  the symmetric function $\Delta_{s_\nu} e_n$ for $t=0$. Based on what was discovered in \cite{GHRY} we conjectured such a construction mechanism.  We prove here that in the case that  $\nu=(m-k,1^k)$ with $1\le m< n$ the  equality of the two sides can be established by the same methods used in \cite{GHRY}. While this work was in progress, we learned that Rhodes and Shimozono   had  previously constructed also such a ``combinatorial side''.
Very recently, Jim Haglund was able to prove that their conjecture follows from the results in \cite{GHRY}.
We show here that an  appropriate
modification of the Haglund arguments proves that the polynomial $\Delta_{s_\nu}e_n$
as well as the Rhoades-Shimozono ``combinatorial side''
have a plethystic evaluation with hook Schur function
expansion.
\end{abstract}


\maketitle

\vspace{-.3 in}
\section{Introduction}

Our manipulations rely heavily on plethystic notation and the terminology used in \cite{GHRY}. In \cite{HRW15}, the reader can find detailed explanations for all of the notations used in this paper.

Recall that Dyck paths in the $n\times n$ lattice square $R_n$ are   paths from $(0,0)$ to $(n,n)$ proceeding by north and east  unit steps, always remaining  weakly above the main diagonal of $R_n$.  These paths are usually represented by their area sequence
$(a_1,a_2,\ldots, a_n)$, where $a_i$ counts the number of complete cells
 between the north step in the $i^{th}$ row and the diagonal. Notice that the $x$-coordinate of the north step in the $i^{th}$ row is simply the difference $u_i=i-1-a_i$.

 A parking function $PF$ supported by the Dyck path $D\in R_n$ is obtained by labeling the north steps of $D$ with $1,2,\ldots ,n$ (usually referred as ``cars''), where the labels increase along the north segments of $D$. Parking functions can be represented as two line arrays
$$
PF=\begin{pmatrix}
c_1 & c_2  &  \cdots  &   c_n\\
a_1 & a_2  &  \cdots  &   a_n\\
\end{pmatrix}
$$
 with cars $c_i$ and area numbers $a_i$ listed from bottom to top.
We also set
$$
area(PF)= \sum_{i=1}^n a_i,
\ess\ess\ess
dinv(PF)= \uu \sum_{1\le i<j\le n}\uu \Big(
\chi(c_i<c_j \ess\&\ess a_i=a_j)\sps
\chi(c_i>c_j \ess\&\ess a_i=a_j+1)
\Big).
$$
Moreover, the word $w(PF)$ is the permutation obtained by reading the cars
in the two line array by decreasing area numbers and from right to left.

 This given, the Haglund factor of a Dyck path $D$ is obtained by setting
$$
H_D(z;t)\ses \prod_{i=2}^{n}\big(1+{z\over t^{a_i}}\big)^{\chi(u_{i-1}= u_i)}.
$$
The $LLT$ polynomial constructed
from the Dyck path $D$ is  obtained by setting
$$
LLT_D(X;q,t)\ses \sum_{D(PF)=D}t^{area(PF)}q^{dinv(PF)}s_{p\big(ides(w(PF)\big)}[X]
$$
where the sum is over parking functions supported by $D$ and the last factor is the Schur function indexed by the composition giving the descent set of the inverse of $w(PF)$.

The special version of the Delta Conjecture of \cite{HRW15} we refer to here is the equality
\begin{equation}\label{eq:theDC}
\DD'_{e_{k-1}} e_n\ses
\sum_{D\in R_n}
LLT_D(X;q,t)\ess
H_D(z;t)\, \Big|_{z^{n-k}}
\end{equation}
where $\DD'_{F}$ is the eigen-operator of the modified Macdonald polynomial
defined by setting for any symmetric function $F$
$$
\DD'_{F}\TH_\mu[X;q,t]\ses
F\big[B_\mu(q,t)-1\big]\, \TH_\mu[X;q,t]
\ess\ess\ess
(\hbox{for all $\mu$}).
$$
As mentioned previously, it was proved in \cite{GHRY} that
the equality in (\ref{eq:theDC}) is valid when both sides are evaluated at $q=0$. Since the left hand side is easily shown to be symmetric in $q$ and $t$, then it must also remain valid when both sides are evaluated at $t=0$.

The main result in \cite{GHRY} is the equality of the symmetric functions on the right hand sides of
the following two equations
\begin{equation}\label{eq:24fromGHRY}
\sum_{\la\part n}LHS_{k,\la}s_{\la'}[x(1-q)]=
\sum_{\mu\part n}q^{- n(\mu)}
P_\mu[X;1/q]
\Big[{ \ell(\mu)-1\atop k-1 }\Big]_q
(q;q)_{\ell(\mu)},
\end{equation}
and
\begin{equation}\label{eq:25fromGHRY}
\sum_{\la\part n}RHS_{k,\la}s_{\la'}[X(1-q)]\ses
q^{-k(k-1)}
(q;q)_k\sum_{\substack{\mu\part n\\ \ell(\mu)=k}}
q^{n(\mu)}
 P_\mu[X;q],
\end{equation}
(these are labeled   (24) and (25) in that paper).
Where
\[
LHS_{k,\la}=q^{-\binom{k}{2}}\langle\Delta_{e_{k-1}} ' e_n, s_\lambda\rangle\Big|_{\substack{t=0}}
,
\ess\ess\ess
RHS_{k,\la}= \sum_{D\in R_n}
\LL LLT_D(X;q,t) ,s_\la\RR
H_D(z;t) \Big|_{z^{n-k}}\Big|_{t=0},
\]
and $ P_\mu[X;q]$, $ Q_\mu[X;q]$ are the Hall-Littlewood polynomials with Cauchy Kernel
\[
\sum_{\mu\part n}P_\mu[X;q]\, Q_\mu[Y;q]
\,=\, h_n\big[XY(1-q)\big]
\]

The present work was started by Jeff Remmel who sadly passed away before its completion. Remmel proposed the possibility of extending the Delta Conjecture when
the symmetric function side
``$\DD'_{e_k}e_n$'' is replaced by
``$\DD'_{s_\nu}e_n$'' , with $\nu$ an arbitrary partition. Remmel asked
the first author to obtain computer data to see if there was any similarity to the data that was obtained in the classical case.  One of the most surprising features of the classical case
is the discovery that the polynomial in (\ref{eq:25fromGHRY}) contains only hook Schur functions in its Schur expansion.
It is precisely this experimental discovery that made the proof of the equality of the polynomials in (\ref{eq:24fromGHRY}) and (\ref{eq:25fromGHRY}) substantially less challenging.

This given, we began an exploration of the Schur expansion of the polynomial
\begin{equation}\label{eq:SymSide}
LHS_{\nu,n}[X;q]\ses\sum_{\la\part n}\LL\DD'_{s_\nu}e_n\scs
s_\la\RR\Big|_{t=0} s_{\la'}[X(1-q)].
\end{equation}
To our surprise, this polynomial also yielded Schur expansions containing only hook Schur functions.
\sas

A crucial feature of \cite{GHRY} was the discovery of a new method for proving the equality of two symmetric functions.
More precisely, the equality of the functions in (\ref{eq:24fromGHRY}) and (\ref{eq:25fromGHRY}) as well as their hook Schur function expansion
was obtained simply by showing that both could be expressed as linear combinations of the following shifted Cauchy kernel, using  the same coefficients $c_i(q)$
\[
\sum_{\mu\part n}P_\mu[X;q]\, Q_\mu[\tttt{ 1-q^i \over  1-q };q]
\,=\,h_n\big[X(1-q^i)\big]
\bigsp (\hbox{for $1\le i\le n$})
\]

The data obtained, in the present case, suggested that all these desired features are present only when
$\nu$ is restricted to be a hook partition $(m-k,1^k)$  with $m<n$.
This discovery prompted us to study the symmetric function
\begin{equation}\label{eq:polyToStudy}
{\it LHS}_{k,m,n}[X,q]\ses \om\Big(\DD_{s_{m-k,1^k}}'e_n\Big|_{t=0}
\Big) [X(1-q)].
\end{equation}
Following the basic steps carried out in \cite{GHRY}  we prove here  that (\ref{eq:polyToStudy}) is equivalent to the identity
\begin{equation}\label{eq:equivalentPoly}
{\it LHS}_{k,m,n}[X,q]= q^{m+{k+1\choose2}}
\sum_{\mu\part  n}   q^{-n(\mu)}(q;q)_{\ell (\mu)}
\Big[{  m -1\atop k}\Big]_q
\Big[{m +\ell(\mu) -(k+2)\atop m}\Big]_q  P_\mu [X;q^{-1}].
\end{equation}

 To mimic the methods used in the classical case, we now need   to produce a ``combinatorial side''.  A simple comparison of the right hand sides of (\ref{eq:24fromGHRY}) and (\ref{eq:25fromGHRY}) shows that, in the case of the Delta Conjecture,
  the symmetric function produced by the ``combinatorial side'' could be obtained by expanding the symmetric function side in terms of the basis
$\{P_\mu[X;q]\}_\mu$.
\sas

This led to the decision to declare the symmetric function obtained by expanding
the polynomial in (\ref{eq:SymSide}) in terms of the basis $\{P_\mu[X;q]\}_\mu$ as the ``combinatorial side'' of (\ref{eq:SymSide}). This decision led us to conjecture the following  ``combinatorial side'' of
(\ref{eq:polyToStudy}).
\begin{equation}\label{eq:combSide}
RHS_{k,m,n}[X;q]
=q^m\sum_{j=2+k}^{m+1}
q^{{k+2\choose 2}-(k+2)j+1}
\Big[{j-2\atop k}\Big]_q\Big[{m-1\atop j-2}\Big]_q
(q;q)_j\bu\sum_{\mu\part n;\ell(\mu)=j}
\bu q^{n(\mu)}P_\mu[X;q].
\end{equation}
In this paper, we first prove that
\begin{equation}\label{eq:symSideEqualsCombSide}
LHS_{k,m,n}[X;q]\ses  RHS_{k,m,n}[X;q].
\end{equation}
\sas

Jeff Remmel succeeded in formulating many of the conjectures needed to prove (\ref{eq:symSideEqualsCombSide}) by precisely following the methods developed in \cite{GHRY}.
In the first section we will outline the proof of (\ref{eq:symSideEqualsCombSide}) and walk through the steps
used by Jeff Remmel to formulate his conjectures needed to complete this proof.  In the second section, we present the technical details carried out by the remaining authors to prove Remmel's conjectures and ultimately prove (\ref{eq:symSideEqualsCombSide}).
\sas

After this project was completed, we learned that Brendon Rhoades and Mark Shimozono had already constructed, for any partition $\nu$, a symmetric function to be viewed as the ``combinatorial side'' and conjectured it to be equal to the
polynomial
\begin{equation}\label{eq:RSSymSide}
{\it LHS}_{\nu,n}[X,q]\ses \om\Big(\DD_{s_{\nu}}'e_n\Big|_{t=0}
\Big)[X].
\end{equation}
Even more importantly, Jim Haglund communicated to us that he was able to prove the Rhoades-Shimozono conjectures using solely the results in  \cite{GHRY}.  We show here that an appropriate modification of Haglund's argument proves that the polynomial in (\ref{eq:RSSymSide}) plethystically evaluated at $X(1-q)$ expands only in terms of hook Schur functions for all $\nu$.  This confirms our original experimental findings about the polynomial in (\ref{eq:SymSide}).
\sas

These truly surprising circumstances demanded  at least two additional investigations.
The first was to determine whether or not there was any relation between our method of predicting a ``combinatorial side'' and
the Rhoades-Shimozono conjectures.  The second was to find a symmetric function reason explaining Haglund's result.  In the final section of  the paper, we present our comments about  these two problems. Here we will add a few words.
\sas

For the first problem the evidence we gathered confirms that in this case
 our combinatorial side predicts the Rhoades-Shimozono combinatorial side.
\sas

To be precise, we show that the symmetric function
\[
LHS_{\nu,n}[X,q]\ses \om\Big(\DD'_{s_\nu}\, e_n\Big|_{t=0}\Big)[X(1-q)]
\]
expands in terms of the $\{P_\mu[X, q^{-1}]\}_\mu$ basis as
\begin{equation}\label{eq:expSymSidePmu}
LHS_{\nu,n}[X,q]
\ses
q^{|\nu| } \sum_{\mu\part n}
s_\nu\big[\tttt{1-q^{\l(\mu)-1}\over  1-q }\big]
q^{-n(\mu)}
(q;q)_{\l(\mu) }
  P_\mu(X,q^{-1}).
\end{equation}
Expanding the polynomial in (\ref{eq:expSymSidePmu}) in terms of the basis $\{P_\mu[X,q]\}_\mu$ yielded our conjectured ``combinatorial side'' to  be the symmetric function
\begin{align*}
RHS_{\nu,n}[X,q]
=&
q^{|\nu| }
\sum_{k=\l(\nu)}^{|\nu|}(q;q)_k
\sum_{|\rho|=|\nu|, , \,
\l(\rho)=k}
\,\,
{K_{\nu,\rho}(q)
\over
\prod_{i=1}^m
(q;q)_{m_i(\rho)}}
q^{n(\rho)}\,\times
\\
&\bigsp\bigsp\bigsp
\times \,
q^{-k(k+1)}
(q;q)_{k+1}\uu\uu\sum_{\multi{\mu\part n\, ;\, \l(\mu)=k+1}}\uu\uu
q^{n(\mu)}
 P_\mu[X;q]
\end{align*}
It turns out that this is precisely the Rhoades-Shimozono ``combinatorial side'' plethystically evaluated at $X(1-q)$.

\section{Jeff Remmel's conjectures in the hook case}

In this section, we will outline the steps followed by Remmel to formulate the conjectures necessary to establish the equality in (\ref{eq:symSideEqualsCombSide}), that is,
\[
LHS_{k,m,n}[X;q]\ses  RHS_{k,m,n}[X;q]
\]
with the polynomial in (\ref{eq:equivalentPoly}) :
\[
{\it LHS}_{k,m,n}[X,q]= q^{m+{k+1\choose2}}
\sum_{\mu\part  n}   q^{-n(\mu)}(q;q)_{\ell (\mu)}
\Big[{  m -1\atop k}\Big]_q
\Big[{m +\ell(\mu) -(k+2)\atop m}\Big]_q  P_\mu [X;q^{-1}]
\]
as the ``symmetric function  side'', and the polynomial in (\ref{eq:combSide}):
\[
RHS_{k,m,n}[X;q]
=q^m\sum_{j=2+k}^{m+1}
q^{{k+2\choose 2}-(k+2)j+1}
\Big[{j-2\atop k}\Big]_q\Big[{m-1\atop j-2}\Big]_q
(q;q)_j\bu\sum_{\mu\part n;\ell(\mu)=j}
\bu q^{n(\mu)}P_\mu[X;q]
\]
as  the ``combinatorial side''.

To follow the classical case, Remmel used the identity
\begin{equation}\label{eq:CauchyKernelIdentity}
{h_n[X(1-q^i)]\over  1-q^i}\ses\sum_{\mu\part n}
q^{n(\mu)} P_\mu[X;q]
\prod_{j=2}^{\l(\mu)}
(1-q^{i- j+1 }  )
\end{equation}
and then tried to solve for the $c_i^{k,m}(q)$ in the equations
\[
RHS_{k,m,n}[X;q]\ses
\sum_{i=1}^n c_i^{k,m}(q)\,
\sum_ {\mu\part n }q^{n(\mu)}  P_\mu[X;q]
\prod_{r=2}^{\ell(\mu)}
(1-q^{i- r+1 }  ),
\]
which may be best rewritten as
\begin{equation}\label{eq:RHSexp}
RHS_{k,m,n}[X;q]=\sum_ {\mu\part n }q^{n(\mu)}  P_\mu[X;q]
\sum_{i=1}^n c_i^{k,m}(q)\,
\prod_{r=2}^{\ell(\mu)}
(1-q^{i- r+1 }  ).
\end{equation}
Likewise (\ref{eq:combSide}) may also be rewritten as
\begin{equation}\label{eq:RHSaltExp}
RHS_{k,m,n}[X;q]  =
\sum_  {\mu\part n }
q^{n(\mu)} P_\mu[X;q] q^m q^{{k+2\choose 2}-(k+2)\ell(\mu)+1}\Big[{\ell(\mu)-2\atop k}\Big]_q\Big[{m-1\atop \ell(\mu)-2}\Big]_q
(q;q)_j.
\end{equation}
Since $\{P_\mu[X;q]\}_\mu$ is a  basis, the equality of (\ref{eq:RHSexp}) and (\ref{eq:RHSaltExp}) can be true if and only if we have
\begin{equation}\label{eq:RemmelConj1}
\sum_{i=1}^n c_i^{k,m}(q)\,
\prod_{r=2}^{j}
(1-q^{i- r+1 })
=
q^{m+{k+2\choose 2}-(k+2)j+1}\Big[{j-2\atop k}\Big]_q\Big[{m-1\atop j-2}\Big]_q
(q;q)_j.
\end{equation}

A careful examination of computer data led  Jeff Remmel to conjecture that the solution of the equations in (\ref{eq:RemmelConj1}) are the
coefficients
\begin{equation}\label{eq:coeffs}
c_{s}^{k,m}(q)
\ses
(-1)^{m+1-s}q^{{ m+1-s \choose 2}-(k+1)m+{k+1\choose 2}}
\Big[
{ m-1 \atop k
}
\Big]_q
\Big[
{ k+2 \atop  m+1-s
}
\Big]_q (1-q^s)
\end{equation}

It turns  out that the proof of the  Remmel conjecture is an easy consequence of the nature of the equations in (\ref{eq:RemmelConj1}). This gives the validity of (\ref{eq:RemmelConj1}) with the $c_i^{k,m}(q)$ given by (\ref{eq:coeffs}). This also proves  the identity
\[
\sum_{i=1}^n c_{i}^{k,m}(q)
{h_n\big[X(1-q^i)\big]\over 1-q^i
}\ses RHS_{k,m,n}[X;q]
\ess\ess\ess
(\hbox{for all $1\le k\le m-1 $ and
$m<n$})
\]
This given, to prove (\ref{eq:symSideEqualsCombSide}) we only
need to show that we also have
\[
\sum_{i=1}^n c_{i}^{k,m}(q)
{h_n\big[X(1-q^i)\big]\over 1-q^i
}\ses LHS_{k,m,n}[X;q]
\ess\ess\ess
(\hbox{for all $1\le k\le m-1 $ and
$m<n$}).
\]
However here,  as in the classical case, rather than the expression in (\ref{eq:CauchyKernelIdentity}) Remmel was forced to use the equivalent expression
\[
{h_n[X(1-q^i)]\over  1-q^i}\ses
\sum_{\mu\part n}
q^{-n(\mu)} P_\mu[X;1/q]
\prod_{j=2}^{\l(\mu)}
(1-  q^{i+ j-1 }  ).
\]
This given, his next goal was to
 prove the identity
\begin{align*}
&
\sum_{\mu\part n}
q^{-n(\mu)} P_\mu[X;1/q]
\sum_{i=1}^n c_{i}^{k,m}(q)
\prod_{j=2}^{\l(\mu)}
(1-  q^{i+ j-1 }  )\ses
\\
\ses
&
\sum_{\mu\part  n}
 q^{-n(\mu)}P_\mu [X;q^{-1}]
q^{m-k-1+{k+2\choose2}}
\Big[{  m -1\atop k}\Big]_q
\Big[{m +\ell(\mu) -(k+2)\atop m}\Big]_q(q;q)_{\ell (\mu)}.
\end{align*}
Since $\{P_\mu [X;q^{-1}]\}_\mu $ is a symmetric function basis, equating the coefficients of $P_\mu [X;q^{-1}]$ on both sides reduced us to verifying the following $q$-identity for all $1\le \l\le n$
\begin{equation}\label{eq:RemmelConj2}
\sum_{i=1}^n c_{i}^{k,m}(q)
\prod_{j=2}^{\l}
(1-  q^{i+ j-1 }  )=
q^{m+{k+1\choose2}}
\Big[{  m -1\atop k}\Big]_q
\Big[{m +\l -(k+2)\atop m}\Big]_q(q;q)_{\l}.
\end{equation}
Actually, in order to  prove (\ref{eq:symSideEqualsCombSide}), we need only show that by means of the Remmel's coefficients defined in (\ref{eq:coeffs}), both of his conjectures (\ref{eq:RemmelConj1}) and (\ref{eq:RemmelConj2}) hold.  The following section contains all the details needed to carry this out.

\section{Technical details}

In this section, we provide the technical details that are needed to prove the Remmel conjectures.  We begin with a particular $q$-binomial identity.

\begin{prop}\label{prop:mainqidentity}
  Given nonnegative integers $m,k,\ell$ with $k+2\leq \ell\leq m+1$,
  \[ \sum_{i=0}^{\min(k+2,m+1-\ell)}(-1)^i q^{\binom{i}{2}} \qbin{k+2}{i}{q} \qbin{m+1-i}{\ell}{q}=q^{(k+2)(m+1-\ell)} \qbin{m-k-1}{\ell-2-k}{q}.\]
\end{prop}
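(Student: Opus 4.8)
The plan is to prove the identity by recognizing both sides as coefficient extractions from a suitable generating function built out of $q$-binomial theorem expansions, or equivalently by an Andrews-style manipulation of basic hypergeometric sums. Concretely, set $N = m+1-\ell$, so the summation range is $0 \le i \le \min(k+2, N)$, and the claimed identity reads
\[
\sum_{i=0}^{\min(k+2,N)} (-1)^i q^{\binom{i}{2}} \qbin{k+2}{i}{q} \qbin{m+1-i}{\ell}{q} = q^{(k+2)N} \qbin{m-k-1}{N-1}{q}.
\]
First I would rewrite $\qbin{m+1-i}{\ell}{q}$ using the $q$-Vandermonde / Cauchy expansion: since $m+1-i = \ell + (N - i)$, one has $\qbin{m+1-i}{\ell}{q} = \qbin{\ell + (N-i)}{N-i}{q}$, and this is the coefficient of $x^{N-i}$ in $\prod_{r \ge 0}$-type products, or more usefully $\qbin{\ell+(N-i)}{N-i}{q} = \sum$ of a single term that can be fed into the $q$-binomial theorem $\sum_{j\ge 0} \qbin{a+j}{j}{q} x^j = 1/(x;q)_{a+1}$ after appropriate rescaling. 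The key step is then to interchange the order of summation so that the inner sum over $i$ becomes an instance of the $q$-binomial theorem $\sum_i (-1)^i q^{\binom{i}{2}} \qbin{k+2}{i}{q} y^i = (y;q)_{k+2}$, with $y$ a monomial in $q$ depending on the outer summation index.

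The cleanest route is probably the finite $q$-Chu–Vandermonde identity in the form
\[
\sum_{i} (-1)^i q^{\binom{i}{2}} \qbin{A}{i}{q}\qbin{B - i}{C}{q} = q^{A(B-C-A)}\,\text{(correction if needed)}\ \qbin{B-A}{C-A}{q},
\]
valid for suitable ranges; with $A = k+2$, $B = m+1$, $C = \ell$ this is exactly our statement up to checking the power of $q$ and the binomial $\qbin{m-k-1}{\ell-2-k}{q} = \qbin{B-A}{C-A}{q}$. So the real work is: (i) locate or re-derive the correct signed $q$-Vandermonde-type identity (it follows from iterating the Pascal recurrence $\qbin{n}{k}{q} = \qbin{n-1}{k}{q} + q^{n-k}\qbin{n-1}{k-1}{q}$, or from multiplying two instances of the $q$-binomial theorem and comparing coefficients of a power of $x$ in $(x;q)_{k+2}\cdot 1/(x;q)_{\ell+1} $-style products), and (ii) bookkeeping the exponent $q^{(k+2)(m+1-\ell)}$, which should drop out naturally from the substitution $y = q^{\text{something}}$ in the $q$-binomial theorem step. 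I would carry it out in the order: reindex with $N$, expand the second $q$-binomial via the $q$-binomial theorem (generating-function form), swap sums, collapse the $i$-sum via $(y;q)_{k+2}$, then re-extract the relevant coefficient to identify $\qbin{m-k-1}{N-1}{q}$ and read off the prefactor.

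An alternative, and perhaps safer for a short self-contained argument, is induction on $\ell$ (equivalently on $N = m+1-\ell$) using the Pascal recurrence on $\qbin{m+1-i}{\ell}{q}$ in the index $\ell$: write $\qbin{m+1-i}{\ell}{q} = \qbin{m-i}{\ell}{q} + q^{m+1-i-\ell}\qbin{m-i}{\ell-1}{q}$, split the sum accordingly, reindex the second piece, and match against the right-hand side after applying the Pascal recurrence there too; the base case $\ell = m+1$ (so $N=0$) is the single identity $\qbin{k+2}{0}{q}\qbin{m+1}{m+1}{q} = 1 = q^0\qbin{m-k-1}{-1-k}{q}$ — wait, here one must be careful that $\qbin{m-k-1}{\ell-2-k}{q}$ with $\ell = m+1$ gives $\qbin{m-k-1}{m-k-1}{q} = 1$, which checks out, and for $\ell = k+2$ (the other extreme, $N = m-1-k$) the sum has its full range and should reduce to the $q$-binomial theorem evaluated at $y=1$ after simplification, giving a telescoping check.

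The main obstacle I anticipate is purely the exponent bookkeeping: both the $q$-binomial-theorem substitution and the Pascal-recurrence induction produce towers of $q$-powers like $q^{\binom{i}{2}}$, $q^{(k+2)N}$, and shift factors $q^{m+1-i-\ell}$, and getting these to cancel to exactly $q^{(k+2)(m+1-\ell)}$ on the nose requires care — in particular verifying that no boundary term is lost when the summation range $\min(k+2, m+1-\ell)$ switches from being governed by $k+2$ to being governed by $m+1-\ell$. I would handle this by first proving the identity in the "generic" range $m+1-\ell \ge k+2$ (so the upper limit is $k+2$ and the sum is a clean $(y;q)_{k+2}$), and then noting that for $m+1-\ell < k+2$ the extra terms $i > m+1-\ell$ vanish automatically because $\qbin{m+1-i}{\ell}{q} = 0$ when $m+1-i < \ell$, so the two cases are in fact the same sum and no separate argument is needed.
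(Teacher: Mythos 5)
Your plan is correct and leads to a valid proof, but by a genuinely different route from the paper. The paper works in the basic hypergeometric formalism: it computes the ratio of consecutive terms, recognizes the sum as $\qbin{m+1}{\ell}{q}\,{}_2\Phi_1\bigl(q^{-k-2},q^{\ell-m-1};q^{-m-1};q,q^{k-\ell+2}\bigr)$, applies the ${}_2\Phi_1$ $q$-Vandermonde evaluation, and then cleans up with the transformation $(q^{-n};q)_m=q^{m(m-2n-1)/2}(-1)^m(q^{n-m+1};q)_m$. Your primary route — coefficient extraction — is more elementary and, when carried out, shorter: with $N=m+1-\ell$, one has $\qbin{m+1-i}{\ell}{q}=[x^{N-i}]\,1/(x;q)_{\ell+1}$ and $\sum_i(-1)^iq^{\binom{i}{2}}\qbin{k+2}{i}{q}x^i=(x;q)_{k+2}$, so the left side equals
\[
[x^N]\,\frac{(x;q)_{k+2}}{(x;q)_{\ell+1}}=[x^N]\,\frac{1}{(q^{k+2}x;q)_{\ell-k-1}}=q^{(k+2)N}\qbin{m-k-1}{N}{q},
\]
which is the right side since $\qbin{m-k-1}{N}{q}=\qbin{m-k-1}{\ell-k-2}{q}$. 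You thus replace the paper's appeal to a $q$-hypergeometric summation theorem plus a $(q^{-n};q)_m$ shift formula by two applications of the $q$-binomial theorem. One small correction: the candidate ``signed $q$-Chu--Vandermonde'' you quote has exponent $q^{A(B-C-A)}$, but the correct one is $q^{A(B-C)}$; you flagged this as ``correction if needed,'' and the coefficient-extraction computation produces the right exponent automatically. Your remark that the $\min$ in the upper limit is immaterial because $\qbin{m+1-i}{\ell}{q}=0$ once $i>m+1-\ell$ is correct and eliminates any case split.
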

\begin{proof}
  We will show that the proposition is a consequence of a well known hypergeometric series identity.  First, we put it in standard form.  Let $$t_j=(-1)^j q^{\binom{j}{2}} \qbin{k+2}{j}{q} \qbin{m+1-j}{\ell}{q}.$$  Then, the ratio of consecutive terms in the summation is $\frac{t_{j+1}}{t_j}$ which after some simplification can be shown to be equal to $\frac{-q^{k-\ell+2}(1-q^{-2-k}q^j)(1-q^{\ell-m-1}q^j)}{(1-q^{j+1})(1-q^{-m-1}q^j)}.$  Thus we can write the summation appearing on the left hand side of the proposition as a hypergeometric series, \begin{equation}\label{eq:hgs} \setlength\arraycolsep{1pt}
\qbin{m+1}{\ell}{q}{}_2 \Phi_1\left(\begin{matrix}q^{-2-k}, &q^{\ell-m-1}\\& q^{-m-1}\end{matrix}\bigg\rvert q;q^{k-\ell+2}\right).\end{equation}
 The $q$-Vandermonde hypergeometric series identity asserts that
 $${}_2 \Phi_1\left(\begin{matrix}A, &q^{-n}\\& C\end{matrix}\bigg\rvert q;\frac{C}{Aq^{-n}}\right)=\frac{(\frac{C}{A};q)_n}{(C,q)_n}.$$  Applying this to (\ref{eq:hgs}) yields \begin{equation}\label{eq:evalhgs} \qbin{m+1}{\ell}{q}\frac{(q^{k-m+1};q)_{m-\ell+1}}{(q^{-m-1};q)_{m-\ell+1}}.\end{equation}
 Using the identity
 \begin{equation}(q^{-n};q)_m=q^{m(m-2n-1)/2}(-1)^m(q^{n-m+1};q)_m, \end{equation}
  equation (\ref{eq:evalhgs}) can be simplified to
  \[ q^{(k+2)(m+1-\ell)} \qbin{m+1}{\ell}{q}\frac{(q^{\ell-k-1};q)_{m-\ell+1}}{(q^{\ell+1};q)_{m-\ell+1}}, \] which can easily be manipulated to become the right hand side of the proposition.
\end{proof}
The identity given in Proposition (\ref{prop:mainqidentity}) gives rise to the following corollary under the substitution $m\rightarrow m-1+\ell$.
\begin{cor}\label{cor:mainqidentity}
\[ \sum_{i=0}^{\min(k+2,m)} (-1)^iq^{\binom{i}{2}}\qbin{k+2}{i}{q}\qbin{m+\ell-i}{\ell}{q}=q^{(k+2)m}\qbin{m+\ell-(k+2)}{\ell-(k+2)}{q}. \]
\end{cor}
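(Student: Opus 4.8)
The plan is to derive Corollary \ref{cor:mainqidentity} directly from Proposition \ref{prop:mainqidentity} by the indicated substitution $m \rightarrow m-1+\ell$, tracking how each ingredient of the proposition transforms. First I would check that the hypothesis survives: Proposition \ref{prop:mainqidentity} requires $k+2 \le \ell \le m+1$, i.e. (after the substitution) $k+2 \le \ell \le (m-1+\ell)+1 = m+\ell$, which is simply $k+2 \le \ell$ and $m \ge 0$; so the corollary holds for all nonnegative $m$ with $\ell \ge k+2$, matching the stated range.

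Next I would substitute into each factor. The upper summation limit $\min(k+2, m+1-\ell)$ becomes $\min(k+2, (m-1+\ell)+1-\ell) = \min(k+2, m)$, which is exactly the limit appearing in the corollary. The $q$-binomial $\qbin{k+2}{i}{q}$ is unchanged. The factor $\qbin{m+1-i}{\ell}{q}$ becomes $\qbin{(m-1+\ell)+1-i}{\ell}{q} = \qbin{m+\ell-i}{\ell}{q}$, again matching. On the right-hand side, the prefactor $q^{(k+2)(m+1-\ell)}$ becomes $q^{(k+2)((m-1+\ell)+1-\ell)} = q^{(k+2)m}$, and the $q$-binomial $\qbin{m-k-1}{\ell-2-k}{q}$ becomes $\qbin{(m-1+\ell)-k-1}{\ell-2-k}{q} = \qbin{m+\ell-(k+2)}{\ell-(k+2)}{q}$, using $\ell-2-k = \ell-(k+2)$. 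Assembling these, both sides of Proposition \ref{prop:mainqidentity} transform termwise into the two sides of the corollary, so the corollary follows.

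Since every step is a mechanical substitution with no genuine manipulation required, there is essentially no obstacle here; the only point demanding a moment's care is confirming that the substituted range of validity still covers all $m \ge 0$ claimed in the corollary statement, and that the truncated sum ranges agree. I would present the argument simply as: "Replacing $m$ by $m-1+\ell$ throughout Proposition \ref{prop:mainqidentity} and simplifying the resulting exponents and binomial indices yields the stated identity," perhaps spelling out the three or four substitutions above for the reader's convenience.
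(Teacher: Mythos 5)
Your proof is correct and is exactly the paper's approach: the paper derives the corollary by the substitution $m\rightarrow m-1+\ell$ in Proposition \ref{prop:mainqidentity}, and your termwise verification of the summation limit, the two $q$-binomial factors, and the power of $q$ is just a careful spelling out of that substitution.
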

What follows next is a proposition which completely verifies Remmel's conjectures.  Namely, that given the coefficients defined in (\ref{eq:coeffs}), both (\ref{eq:RemmelConj1}) and (\ref{eq:RemmelConj2}) hold.
\begin{prop}\label{prop:simplifyingproduct}Given nonnegative integers $k,m,n,\ell$ with $k+2\leq \ell \leq m+1\leq n$,
\begin{enumerate}
\item $\displaystyle \sum_{i=1}^n c_i^{k,m}\prod_{j=2}^{\ell}(1-q^{i-j+1})=q^{m+\binom{k+2}{2}-(k+2)\ell+1}\qbin{\ell-2}{k}{q}\qbin{m-1}{\ell-2}{q}(q;q)_\ell$ \label{prop:simpa}
\item $\displaystyle \sum_{i=1}^n c_i^{k,m}\prod_{j=2}^{\ell}(1-q^{i+j-1})=q^{m+\binom{k+1}{2}}\qbin{m-1}{k}{q}\qbin{m+\ell-(k+2)}{m}{q}(q;q)_\ell$ \label{prop:simpb}
\end{enumerate}
    \end{prop}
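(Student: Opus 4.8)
\textbf{Proof proposal for Proposition \ref{prop:simplifyingproduct}.}

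The plan is to treat both identities uniformly by recognizing that the product $\prod_{j=2}^{\ell}(1-q^{i\pm j+1})$ is, up to a power of $q$, a ratio of $q$-Pochhammer symbols, and that after substituting the explicit formula \eqref{eq:coeffs} for $c_i^{k,m}$ the sum over $i$ collapses into a single $q$-binomial sum of exactly the shape handled by Corollary \ref{cor:mainqidentity} (equivalently Proposition \ref{prop:mainqidentity}). First I would reindex: in \eqref{eq:coeffs} the factor $\qbin{k+2}{m+1-s}{q}$ vanishes unless $m+1-s$ lies between $0$ and $k+2$, so only the terms with $s\in\{m-1-k,\dots,m+1\}$ survive, and it is convenient to set $i=m+1-s$ so that $i$ runs over $0,\dots,k+2$ (the upper limit $n$ on the original sum is harmless since $m+1\le n$). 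In these new variables the sign $(-1)^{m+1-s}$ becomes $(-1)^i$ and the Gaussian binomial becomes $\qbin{k+2}{i}{q}$, while the exponent $\binom{m+1-s}{2}$ becomes $\binom{i}{2}$ — precisely the ingredients appearing on the left of Corollary \ref{cor:mainqidentity}.

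The second ingredient is to rewrite the product. For part \eqref{prop:simpb}, $\prod_{j=2}^{\ell}(1-q^{i+j-1}) = \prod_{r=i+1}^{i+\ell-1}(1-q^{r}) = (q^{i+1};q)_{\ell-1} = (q;q)_{i+\ell-1}/(q;q)_{i}$, which when multiplied by the leftover $(1-q^s)=(1-q^{m+1-i})$ from \eqref{eq:coeffs} and divided by appropriate factorials turns $c_i^{k,m}$ times the product into a constant (independent of $i$) times $\qbin{m+\ell-i}{\ell}{q}$ — again matching Corollary \ref{cor:mainqidentity} with the roles of $m$ and $\ell$ there chosen so that $q^{(k+2)m}\qbin{m+\ell-(k+2)}{\ell-(k+2)}{q}$ appears on the right. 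One then only has to check that the overall prefactor in $q$ and $(q;q)$-symbols simplifies to $q^{m+\binom{k+1}{2}}\qbin{m-1}{k}{q}\qbin{m+\ell-(k+2)}{m}{q}(q;q)_\ell$, using $\qbin{m+\ell-(k+2)}{\ell-(k+2)}{q}=\qbin{m+\ell-(k+2)}{m}{q}$ and collecting the exponents $\binom{k+2}{2}$, $-(k+1)m$, $\binom{k+1}{2}$ from \eqref{eq:coeffs} against the $(k+2)m$ produced by the corollary. Part \eqref{prop:simpa} is entirely parallel: there $\prod_{j=2}^{\ell}(1-q^{i-j+1})=(q^{i-\ell+1};q)_{\ell-1}$, and after the substitution $i=m+1-s$ one is led to invoke Proposition \ref{prop:mainqidentity} directly (in its original $m,k,\ell$ normalization) rather than its shifted corollary; the $q^{(k+2)(m+1-\ell)}$ on the right of that proposition is what ultimately produces the exponent $-(k+2)\ell$ in the target.

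The bulk of the work, and the only place where anything can go wrong, is the bookkeeping of the powers of $q$ and of the $(q;q)$-factorials: matching $\binom{m+1-s}{2}$ with $\binom{i}{2}$, tracking the shift $m\to m-1+\ell$ used to pass from Proposition \ref{prop:mainqidentity} to Corollary \ref{cor:mainqidentity}, and verifying that the surviving scalar prefactor is exactly the one claimed. I expect the main obstacle to be ensuring that the truncation of the $i$-sum at $k+2$ is legitimate and that no boundary terms (e.g.\ $\ell=m+1$, where $\min(k+2,m+1-\ell)=0$ reduces the corollary's sum to a single term) are mishandled; once the two sums are algebraically identified with the left-hand side of Corollary \ref{cor:mainqidentity} (resp.\ Proposition \ref{prop:mainqidentity}), the identities follow immediately and the remaining steps are routine $q$-binomial manipulations of the type already illustrated in the proof of Proposition \ref{prop:mainqidentity}.
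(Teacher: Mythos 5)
Your proposal follows essentially the same route as the paper's own proof: substitute the explicit formula \eqref{eq:coeffs} for $c_i^{k,m}$, observe that the $q$-binomial $\qbin{k+2}{m+1-s}{q}$ truncates the range, reindex via $s\mapsto m+1-i$, absorb the factor $(1-q^{m+1-i})$ into the product to form $\qbin{i}{\ell}{q}$ (for part~\ref{prop:simpa}) or $\qbin{m+\ell-i}{\ell}{q}$ (for part~\ref{prop:simpb}) times $(q;q)_\ell$, and then invoke Proposition~\ref{prop:mainqidentity} (resp.\ Corollary~\ref{cor:mainqidentity}). Apart from a few cosmetic slips (reusing $i$ and $s$ ambiguously before and after reindexing, and a stray $\binom{k+2}{2}$ attributed to \eqref{eq:coeffs}), the strategy and the key lemma applications are identical to those in the paper.
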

\begin{proof}
First, it is worth noting that by our definitions $c_i^{k,m}=0$ when either $i>m+1$ or $i<m-k-1$.
To prove part~\ref{prop:simpa}, notice that when $i<\ell$ the product contains a 0   term.  Thus,
\begin{eqnarray*}
&&\sum_{i=1}^n c_i^{k,m}\prod_{j=2}^{\ell}(1-q^{i-j+1})\\
&=&\sum_{i=\max(m-k-1,\ell)}^{m+1} c_i^{k,m}\prod_{j=2}^{\ell}(1-q^{i-j+1})\\
&=&\sum_{i=\max(m-k-1,\ell)}^{m+1} c_i^{k,m}\frac{(1-q)\cdots(1-q^{i-1})}{(1-q)\cdots(1-q^{i-\ell})}\\
&=&\sum_{i=\max(m-k-1,\ell)}^{m+1} c_i^{k,m}\qbin{i}{\ell}{q}\frac{(q;q)_\ell}{1-q^i}\\
&=&\sum_{i=\max(m-k-1,\ell)}^{m+1} (-1)^{m+1-i}q^{\binom{m+1-i}{2}-(k+1)m+\binom{k+1}{2}}\qbin{m-1}{k}{q}\qbin{k+2}{m+1-i}{q}\qbin{i}{\ell}{q}(q;q)_\ell\\
&=&\sum_{i=0}^{\min(k+2,m+1-\ell)} (-1)^{i}q^{\binom{i}{2}-(k+1)m+\binom{k+1}{2}}\qbin{m-1}{k}{q}\qbin{k+2}{i}{q}\qbin{m+1-i}{\ell}{q}(q;q)_\ell\\
&=&q^{\binom{k+1}{2}-m(k+1)}\qbin{m-1}{k}{q}\sum_{i=0}^{\min(k+2,m+1-\ell)} (-1)^{i}q^{\binom{i}{2}}\qbin{k+2}{i}{q}\qbin{m+1-i}{\ell}{q}(q;q)_\ell.\\
\end{eqnarray*}
Then using Proposition \ref{prop:mainqidentity},
\begin{eqnarray*}
&=&q^{\binom{k+1}{2}-m(k+1)}\qbin{m-1}{k}{q} q^{(k+2)(m+1-\ell)} \qbin{m-k-1}{\ell-2-k}{q}(q;q)_\ell\\
&=&q^{m+\binom{k+2}{2}-(k+2)\ell+1} \qbin{\ell-2}{k}{q}\qbin{m-1}{\ell-2}{q}(q;q)_\ell.\\
\end{eqnarray*}
This completes the proof of part~\ref{prop:simpa}.  To prove part~\ref{prop:simpb},
\begin{eqnarray*}
&&\sum_{i=1}^n c_i^{k,m}\prod_{j=2}^{\ell}(1-q^{i+j-1})\\
&=&\sum_{i=\max(m-k-1,1)}^{m+1} c_i^{k,m}\prod_{j=2}^{\ell}(1-q^{i+j-1})\\
&=&\sum_{i=0}^{\min(k+2,m)} c_{m+1-i}^{k,m}\prod_{j=2}^{\ell}(1-q^{m+j-i})\\
&=&q^{\binom{k+1}{2}-(k+1)m}\qbin{m-1}{k}{q}\sum_{i=0}^{\min(k+2,m)} (-1)^iq^{\binom{i}{2}}\qbin{k+2}{i}{q}(1-q^{m+1-i})\prod_{j=2}^{\ell}(1-q^{m+j-i})\\
&=&q^{\binom{k+1}{2}-(k+1)m}\qbin{m-1}{k}{q}\sum_{i=0}^{\min(k+2,m)} (-1)^iq^{\binom{i}{2}}\qbin{k+2}{i}{q}\qbin{m+\ell-i}{\ell}{q}(q;q)_\ell\\
&=&q^{\binom{k+1}{2}-(k+1)m}\qbin{m-1}{k}{q}q^{(k+2)m}\qbin{m+\ell-(k+2)}{\ell-(k+2)}{q}(q;q)_\ell\\
&=&q^{m+\binom{k+1}{2}}\qbin{m-1}{k}{q}\qbin{m+\ell-(k+2)}{m}{q}(q;q)_\ell
\end{eqnarray*}
The next to last step is justified by Corollary \ref{cor:mainqidentity}.
\end{proof}

\section{Additional investigations}
To begin the investigation of whether our ``combinatorial side'' was related to that of Rhodes-Shimozono, we first   expanded the symmetric function $\om\Big(\DD_{s_{\nu}}'e_n\Big|_{t=0}\Big)[X(1-q)]$ in terms of the basis $\{P_\mu(X,q^{-1})\}_\mu$.  This is done in the following theorem.
\begin{thm}\label{thm:genSymSideExp}
\begin{equation}\label{eq:thmGenSymSideExp}
\om\Big(\DD_{s_{\nu}}'e_n\Big|_{t=0}
\Big)[X(1-q)]
 \ses q^{|\nu| } \sum_{\mu\part n}
s_\nu\big[\tttt{1-q^{\l(\mu)-1}\over  1-q }\big]
q^{-n(\mu)}
(q;q)_{\l(\mu) }
  P_\mu(X,q^{-1})
\end{equation}
\end{thm}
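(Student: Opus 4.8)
The plan is to reduce the general statement to the case $s_\nu=e_{k-1}$, which is already contained in \cite{GHRY} as equation~(\ref{eq:24fromGHRY}). Starting from the modified Macdonald expansion $e_n=\sum_{\mu\part n}c_\mu(q,t)\,\TH_\mu[X;q,t]$ and using $\DD'_G\TH_\mu[X;q,t]=G[B_\mu(q,t)-1]\,\TH_\mu[X;q,t]$ for an arbitrary symmetric function $G$, one has $\DD'_G\,e_n=\sum_{\mu\part n}c_\mu(q,t)\,G[B_\mu(q,t)-1]\,\TH_\mu[X;q,t]$. The first point I would establish is that each coefficient $c_\mu(q,t)$ is regular at $t=0$ (immediate from Haiman's product formula for $c_\mu$, or quotable from \cite{GHRY}) and that $B_\mu(q,0)-1=q\,[\mu_1-1]_q$, where $[j]_q=1+q+\cdots+q^{j-1}$. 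It then follows that
\[
\DD'_G\,e_n\big|_{t=0}=\sum_{\mu\part n}c_\mu(q,0)\,G\big[q\,[\mu_1-1]_q\big]\,\TH_\mu[X;q,0],
\]
so that $\om\big(\DD'_G\,e_n|_{t=0}\big)[X(1-q)]$ is a $\Q(q)$-linear functional of $G$ depending on $G$ only through the $n$ numbers $G\big[q\,[j-1]_q\big]$, $j=1,\dots,n$ (since $\mu_1\in\{1,\dots,n\}$ for $\mu\part n$).

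Next I would perform an interpolation. The $n\times n$ matrix with entries $e_{j-1}\big[[\l-1]_q\big]$, $1\le j,\l\le n$, is triangular with nonzero diagonal $e_{\l-1}\big[[\l-1]_q\big]=q^{\binom{\l-1}{2}}$, so there are unique $\beta_1(q),\dots,\beta_n(q)\in\Q(q)$ with $\sum_{j=1}^n\beta_j(q)\,q^{\,j-1}\,e_{j-1}\big[[\l-1]_q\big]=q^{|\nu|}\,s_\nu\big[[\l-1]_q\big]$ for $\l=1,\dots,n$. Setting $F=\sum_{j=1}^n\beta_j(q)\,e_{j-1}$ and using homogeneity of $e_{j-1}$, one gets $F\big[q\,[j-1]_q\big]=s_\nu\big[q\,[j-1]_q\big]$ for $j=1,\dots,n$; by the previous paragraph this forces $\DD'_{s_\nu}e_n|_{t=0}=\DD'_F\,e_n|_{t=0}$, whence, by linearity, $\om\big(\DD'_{s_\nu}e_n|_{t=0}\big)[X(1-q)]=\sum_{j=1}^n\beta_j(q)\,\om\big(\DD'_{e_{j-1}}e_n|_{t=0}\big)[X(1-q)]$.

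To finish, I would rewrite (\ref{eq:24fromGHRY}) in the form needed. Since $\om(g)[X(1-q)]=\sum_\la\langle g,s_\la\rangle\,s_{\la'}[X(1-q)]$ and $LHS_{j,\la}=q^{-\binom j2}\langle\DD'_{e_{j-1}}e_n,s_\la\rangle\big|_{t=0}$, that identity reads
\[
\om\big(\DD'_{e_{j-1}}e_n|_{t=0}\big)[X(1-q)]=q^{\,j-1}\sum_{\mu\part n}q^{-n(\mu)}(q;q)_{\l(\mu)}\,e_{j-1}\big[[\l(\mu)-1]_q\big]\,P_\mu[X;q^{-1}],
\]
after using $\qbin{\l(\mu)-1}{j-1}{q}=q^{-\binom{j-1}{2}}e_{j-1}\big[[\l(\mu)-1]_q\big]$ and $\binom j2-\binom{j-1}2=j-1$. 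Substituting this into the previous display and pulling $P_\mu[X;q^{-1}]$ to the front collapses the inner $j$-sum, via the defining relation of the $\beta_j$ and the fact that $\l(\mu)\in\{1,\dots,n\}$, to $q^{|\nu|}s_\nu\big[[\l(\mu)-1]_q\big]=q^{|\nu|}s_\nu\big[\tfrac{1-q^{\l(\mu)-1}}{1-q}\big]$, which is exactly the right-hand side of (\ref{eq:thmGenSymSideExp}).

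The step I expect to be the crux is the first one: checking that $\DD'_G\,e_n|_{t=0}$ is a well-defined linear functional of $G$ factoring through the values $G[B_\mu(q,0)-1]$. Concretely this is the regularity at $t=0$ of the modified Macdonald coefficients $c_\mu(q,t)=\tfrac{(1-q)(1-t)B_\mu(q,t)\Pi_\mu(q,t)}{w_\mu(q,t)}$, which one verifies by inspecting the $t$-valuations of $w_\mu$, $B_\mu$, $\Pi_\mu$; everything afterwards is linear algebra over $\Q(q)$ together with the already-available identity (\ref{eq:24fromGHRY}). As an alternative, one can prove (\ref{eq:thmGenSymSideExp}) without invoking \cite{GHRY} by a direct computation: evaluate $c_\mu$, $B_\mu$, $\Pi_\mu$, $w_\mu$ at $t=0$, use $\om\TH_\mu[X;q,0]=\TH_{\mu'}[X;0,q]=q^{n(\mu')}Q'_{\mu'}[X;q^{-1}]$, the Hall--Littlewood identity $Q'_\lambda[X(1-s);s]=\prod_i(s;s)_{m_i(\lambda)}\,P_\lambda[X;s]$, the homogeneity of $Q'_\lambda$, and the reindexing $\mu\leftrightarrow\mu'$; there the bulk of the work is the bookkeeping of powers of $q$ and the auxiliary identity $w_\mu(q,0)=(-1)^n q^{\,2n(\mu')+n}\prod_i(q^{-1};q^{-1})_{m_i(\mu')}$.
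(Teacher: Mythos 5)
Your interpolation argument is correct and takes a genuinely different route from the paper. The paper proves the theorem by direct computation: starting from the $\widetilde H_\mu$-expansion of $e_n$ from \cite{GHRY}, it first \emph{swaps $q$ and $t$} (legitimate since $e_n$ is independent of both), so that at $t=0$ one lands on $B_\mu(0,q)=[\ell(\mu)]_q$, $\Pi'_\mu(0,q)=(q;q)_{\ell(\mu)-1}$ and the explicit product form of $w_\mu(0,q)$; it then passes to $X(1-q)$, applies $\om$, and finishes with the two Hall--Littlewood facts $Q_\mu[X;q]=\sum_\la s_\la[X(1-q)]K_{\la,\mu}(q)$ and the $P$--$Q$ relation, plus $\widetilde K_{\la,\mu}(q)=q^{n(\mu)}K_{\la,\mu}(q^{-1})$. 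You instead exploit that $\DD'_G e_n\big|_{t=0}$ depends on $G$ only through the $n$ values $G[B_\mu(q,0)-1]=G\big[q[j-1]_q\big]$, so after checking that the coefficients $c_\mu(q,t)$ are regular at $t=0$ (a routine inspection of $w_\mu$, which is also implicitly used by the paper), you reduce the problem to the elementary-symmetric case $G=e_{j-1}$ by a triangular interpolation, and then quote $(\ref{eq:24fromGHRY})$ directly; the inner $j$-sum collapses by the very system that defined the $\beta_j$, using that $\ell(\mu)$ also runs over $\{1,\dots,n\}$. What your argument buys is conceptual economy: it bootstraps the general $s_\nu$ case from the already-established $e_{k-1}$ case with pure linear algebra, avoiding the Kostka and Hall--Littlewood bookkeeping and the $q\leftrightarrow t$ swap; what it costs is an explicit dependence on the full strength of $(\ref{eq:24fromGHRY})$ (the paper only invokes more elementary ingredients from \cite{GHRY}) and the need to separately justify regularity at $t=0$, which the swap trick makes automatic. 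Both the main interpolation argument and the sketched ``direct computation'' alternative you offer at the end (which differs from the paper mainly in deferring the transposition $\mu\leftrightarrow\mu'$ rather than swapping $q$ and $t$ up front) are sound.
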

\begin{proof}
We begin with the following expansion of $e_n$ (Lemma 2.1 in \cite{GHRY}),
\[ e_n (X) = \sum_{\mu\vdash n}\frac{(1-q)(1-t)\widetilde{H}_\mu (X;q,t) \Pi_\mu ^{\prime}(q,t)B_\mu (q,t) }{w_\mu (q,t)}.
\]
Recognizing that the left hand side does not contain the indeterminates $q$ and $t$, we can interchange them and obtain
\[ e_n (X) = \sum_{\mu\vdash n}\frac{(1-q)(1-t)\widetilde{H}_\mu (X;t,q) \Pi_\mu ^{\prime}(t,q)B_\mu (t,q) }{w_\mu (t,q)}.
\]
Then using the definition of $\DD'$, and setting $t=0$, we have
\begin{equation}\label{eq:expOfEn}\DD_{s_{\nu}}'e_n\Big|_{t=0} = \sum_{\mu\vdash n}\frac{(1-q)s_{\nu}\big[B_\mu(0,q) -1 \big]\widetilde{H}_\mu (X;0,q) \Pi_\mu ^{\prime}(0,q)B_\mu (0,q) }{w_\mu (0,q)}.
\end{equation}
In \cite{GHRY}, it was noted that
\begin{align*}
B_\mu (0,q) &= 1+q+\dotsb +q^{\ell (\mu)-1}=\frac{1-q^{\ell(\mu)}}{1-q},\\
 \Pi_\mu ' (0,q) &= (q;q)_{\ell (\mu)-1},\\
w_\mu (0,q) &= \prod_{c\in \mu}q^{l(c)} \cdot \prod_{\substack{c\in \mu\\ a(c)=0}}(1-q^{l(c)+1})\cdot \prod_{\substack{c\in \mu \\ a(c)>0}}(-q^{l(c)+1})\\
&= (-1)^{n-\ell (\mu)}q^{2n(\mu)+n-\sum_i \binom{m_i(\mu) +1}{2}}\prod_{i}(q;q)_{m_i(\mu)},
\end{align*}
where $(q;q)_m = (1-q)\cdots (1-q^m)$.
Substituting these into (\ref{eq:expOfEn}) and simplifying gives
\[
\DD_{s_{\nu}}'e_n\Big|_{t=0} = \sum_{\mu\vdash n}(-1)^{n-\ell (\mu)}s_{\nu}\big[q+q^2+\dotsb +q^{\ell (\mu)-1} \big]q^{-2n(\mu)-n+\sum_i \binom{m_i(\mu) +1}{2}}\Big[{ \l(\mu) \atop m(\mu) }\Big]_q \widetilde{H}_\mu (X;0,q).
\]
Replacing $X$ by $X(1-q)$ and factoring a $q$ out of the plethystic evaluation, the right hand side becomes
\[
q^{|\nu|}\sum_{\mu\vdash n}(-1)^{n-\ell (\mu)}s_{\nu}\Big[\frac{1-q^{\ell (\mu)-1}}{1-q} \Big]q^{-2n(\mu)-n+\sum_i \binom{m_i(\mu) +1}{2}}\Big[{ \l(\mu) \atop m(\mu) }\Big]_q \widetilde{H}_\mu (X(1-q);0,q),
\]
and then expanding $\widetilde{H}_\mu (X(1-q);0,q)$ yields
\[
q^{|\nu|}\sum_{\mu\vdash n}(-1)^{n-\ell (\mu)}s_{\nu}\Big[\frac{1-q^{\ell (\mu)-1}}{1-q} \Big]q^{-2n(\mu)-n+\sum_i \binom{m_i(\mu) +1}{2}}\Big[{ \l(\mu) \atop m(\mu) }\Big]_q \sum_{\la\part n}s_\la\left[X(1-q)\right]\widetilde{K}_{\lambda , \mu}(q).
\]
But, since $s_\la\left[X(1-q)\right]=(-q)^ns_{\la'}\left[X(1-1/q)\right]$ and $\widetilde{K}_{\lambda , \mu}(q) = q^{n(\mu)}K_{\lambda,\mu}(q^{-1})$, we can now apply $\om$ and eventually arrive at
\[
q^{|\nu|}\sum_{\mu\vdash n}(-1)^{\ell (\mu)}s_{\nu}\Big[\frac{1-q^{\ell (\mu)-1}}{1-q} \Big]q^{-n(\mu)+\sum_i \binom{m_i(\mu) +1}{2}}\Big[{ \l(\mu) \atop m(\mu) }\Big]_q \sum_{\la\part n}s_{\la}\left[X(1-q^{-1})\right]K_{\lambda , \mu}(q^{-1}).
\]
We will next need  two  facts stated in \cite{GHRY}:
\begin{equation}\label{eq:fact1fromGHRY}
 Q_\mu(X,q) =  \sum_{\la \part n}s_\la\left[X(1-q)\right]K_{\la,\mu}(q)
\end{equation}
and
\begin{equation}\label{eq:fact2fromGHRY}
 P_\mu(X,q^{-1}) =  \frac{(-1)^{\ell(\mu)} q^{\sum_{i}{m_i(\mu)+1\choose 2}}}{\prod (q;q)_{m_i(\mu)}}Q_\mu(X,q^{-1}),
\end{equation}
where $\mu$ is a partition of $n$.
Applying (\ref{eq:fact1fromGHRY}) at $q^{-1}$ gives
\[
q^{|\nu|}\sum_{\mu\vdash n}(-1)^{\ell (\mu)}s_{\nu}\Big[\frac{1-q^{\ell (\mu)-1}}{1-q} \Big]q^{-n(\mu)+\sum_i \binom{m_i(\mu) +1}{2}}
{(q;q)_{\l(\mu)}
\over
\prod_{i=1}^{\l(\mu)}(q;q)_{m_i(\mu)}
}\,\, Q_\mu(X,q^{-1}),
\]
and then applying (\ref{eq:fact2fromGHRY}) we prove the theorem, namely,
$$
\om\Big(\DD_{s_{\nu}}'e_n\Big|_{t=0}
\Big)[X(1-q)]
 \ses q^{|\nu| } \sum_{\mu\part n}
s_\nu\big[\tttt{1-q^{\l(\mu)-1}\over  1-q }\big]
q^{-n(\mu)}
(q;q)_{\l(\mu) }
  P_\mu(X,q^{-1}).
$$

\end{proof}

\begin{cor}
The identity (\ref{eq:equivalentPoly}), namely,
\[
{\it LHS}_{k,m,n}[X,q]=
q^{m +{k+1\choose2}}
\sum_ {\mu\part  n}
\Big[{  m -1\atop k}\Big]_q
\Big[{m +\ell(\mu) -(k+2)\atop m}\Big]_q q^{-n(\mu)}(q;q)_{\ell (\mu)} P_\mu [X;q^{-1}],
\]
is none other but a specialization of Theorem \ref{thm:genSymSideExp}, at $\nu=(m-k,1^k)$.
\end{cor}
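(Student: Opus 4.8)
The plan is to substitute $\nu=(m-k,1^k)$ directly into the expansion (\ref{eq:thmGenSymSideExp}) provided by Theorem \ref{thm:genSymSideExp}, and then to match the resulting coefficients of $P_\mu[X;q^{-1}]$ with those displayed in (\ref{eq:equivalentPoly}). Since $|\nu|=(m-k)+k=m$ and, by the very definition (\ref{eq:polyToStudy}), $\om\Big(\DD_{s_{m-k,1^k}}'e_n\Big|_{t=0}\Big)[X(1-q)]={\it LHS}_{k,m,n}[X,q]$, the two left-hand sides already agree. Hence the entire statement reduces to the scalar identity
\[
q^{m}\,s_{(m-k,1^k)}\big[\tttt{1-q^{\ell(\mu)-1}\over 1-q}\big]\ses q^{m+{k+1\choose 2}}\,\qbin{m-1}{k}{q}\,\qbin{m+\ell(\mu)-(k+2)}{m}{q},
\]
to be verified for every $\mu\part n$. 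Setting $a=m-k$, $b=k$, $N=\ell(\mu)-1$ and cancelling $q^{m}$, this is the same as
\[
s_{(a,1^{b})}\big[1+q+\cdots+q^{N-1}\big]\ses q^{{b+1\choose 2}}\,\qbin{a+b-1}{b}{q}\,\qbin{N+a-1}{a+b}{q}.
\]

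To prove this I would invoke the classical principal specialization formula $s_\la(1,q,\dots,q^{N-1})=q^{n(\la)}\prod_{(i,j)\in\la}\frac{1-q^{N-i+j}}{1-q^{h(i,j)}}$, where $h(i,j)$ is the hook length of the cell $(i,j)$, and evaluate it on the hook $\la=(a,1^{b})$. Here $n(\la)={b+1\choose 2}$, which accounts for the prefactor; the corner cell $(1,1)$ has hook length $a+b$ and contributes $1-q^{N}$ to the numerator; the other first-row cells $(1,j)$, $2\le j\le a$, have hook lengths $a-1,\dots,1$ and contribute $(1-q^{N+1})\cdots(1-q^{N+a-1})$; and the first-column cells $(i,1)$, $2\le i\le b+1$, have hook lengths $b,\dots,1$ and contribute $(1-q^{N-1})\cdots(1-q^{N-b})$. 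Collecting the factors, the denominator is $(1-q^{a+b})(q;q)_{a-1}(q;q)_{b}$ and the numerator simplifies to $(q;q)_{N+a-1}/(q;q)_{N-b-1}$; a one-line rewriting of these $q$-Pochhammer symbols as $q$-binomial coefficients turns the quotient into exactly $q^{{b+1\choose 2}}\qbin{a+b-1}{b}{q}\qbin{N+a-1}{a+b}{q}$. Substituting back $a+b=m$, $a+b-1=m-1$, $N+a-1=m+\ell(\mu)-(k+2)$ and ${b+1\choose 2}={k+1\choose 2}$ then recovers the coefficient in (\ref{eq:equivalentPoly}).

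Two degenerate points should be recorded, neither of them a real obstacle. First, $a=m-k\ge 1$ always, since $1\le k\le m-1$, so $(m-k,1^k)$ is a bona fide hook. Second, when $\ell(\mu)\le k+1$ the alphabet $1,q,\dots,q^{N-1}$ has fewer than $k+1=\ell(\la)$ letters, so $s_{(a,1^{b})}$ vanishes there; correspondingly the numerator product above contains a zero factor $1-q^{0}$ (from one of the first-column cells, or from the corner when $\ell(\mu)=1$), while on the other side $\qbin{m+\ell(\mu)-(k+2)}{m}{q}=0$ because its top entry then lies in $\{0,1,\dots,m-1\}$. Thus both sides vanish simultaneously and the identity holds in the degenerate range as well. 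The only computational content of the argument is the $q$-Pochhammer bookkeeping in the middle paragraph, and it is entirely routine; one could instead expand $s_{(a,1^{b})}$ into complete and elementary symmetric functions and reduce to a $q$-binomial summation, but the principal specialization route is the most direct.
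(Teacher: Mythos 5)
Your proposal is correct and follows essentially the same route as the paper: both reduce the corollary to verifying that $q^{m}\,s_{(m-k,1^k)}\bigl[\tfrac{1-q^{\ell-1}}{1-q}\bigr]=q^{m+\binom{k+1}{2}}\qbin{m-1}{k}{q}\qbin{m+\ell-(k+2)}{m}{q}$, and both establish this by applying the principal specialization formula for Schur functions to the hook $(m-k,1^k)$ and rewriting the hook/content products as $q$-Pochhammer quotients. The only cosmetic difference is that you use the version $s_\la(1,q,\dots,q^{N-1})=q^{n(\la)}\prod_{x\in\la}\frac{1-q^{N+c(x)}}{1-q^{h(x)}}$, while the paper phrases the same formula via the conjugate shape as $q^{n(\la)}\bigl[{n\atop\la'}\bigr]_q$; your explicit treatment of the degenerate range $\ell(\mu)\le k+1$ is a welcome small addition the paper leaves implicit.
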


\vskip  -.12in
\begin{flushright}
\begin{tikzpicture}[scale=0.46, line width=1pt]
  \draw (0,0) grid (1,-5);
  \draw (1,-4) grid (4,-5);
  \node at (.5,-.5) {-4};
  \node at (.5,-1.5) {-3};
  \node at (.5,-2.5) {-2};
  \node at (.5,-3.5) {-1};
  \node at (.5,-4.5) {0};
  \node at (1.5,-4.5) {1};
  \node at (2.5,-4.5) {2};
  \node at (3.5,-4.5) {3};
  \node[right] at (2,-.5) {$m$-$k$-$1$};
  \draw[->](2,-.5) -- (1,-.5);
  \node[left] at (2.75,-3) {$k$};
  \draw[->](2.5,-3.25) -- (3.5,-4);
  \node at (2,-5.5) {$c(x)$};

  \draw (6,0) grid (7,-5);
  \draw (6,-4) grid (10,-5);
  \node at (6.5,-.5) {1};
  \node at (6.5,-1.5) {2};
  \node at (6.5,-2.5) {3};
  \node at (6.5,-3.5) {4};
  \node at (6.5,-4.5) {8};
  \node at (7.5,-4.5) {3};
  \node at (8.5,-4.5) {2};
  \node at (9.5,-4.5) {1};
  \node[right] at (7.75,-2.25) {$m$-$k$-$1$};
  \draw[->](8,-2.5) -- (6.75,-3.5);
  \node[right] at (7.85,-3.5) {$m$};
  \draw[->](8,-3.5) -- (6.75,-4.5);
  \node at (8,-5.5) {$h(x)$};
\end{tikzpicture}
\end{flushright}

\vskip  -1.6in

\vskip .3in
\begin{proof}

Recall that the definition of the left hand side of (\ref{eq:equivalentPoly}) is
\vskip .1in

\ess\ess\ess$
{\it LHS}_{k,m,n}[X,q]= \om\Big(\DD_{s_{m-k,1^k}}'e_n\Big|_{t=0} \Big)[X(1-q)]
$

\vskip .1in
\noindent
Now the Macdonald formula for the plethystic evaluation of $s_\la$

\vskip .1in
\noindent
at $ 1+q+\cdots+q^{n-1}$  is

\vskip -.12in
$$
s_\la[1+q+\cdots+q^{n-1}]
= q^{n(\la)}\Big[{n \atop  \la'} \Big]_q
$$
\vskip -.16in
\noindent
where
$$
\Big[{n \atop  \la } \Big]_q
\ses \prod_{x\in \la}{  1-q^{n-c(x)} \over 1-q^{h(x)}}
$$
With $c(x)$ and $h(x)$ the {\it content} and the {\it hook} of cell $x\in \la$.
\sas

Now for $\la=(m-k,1^k)$ we have  $n(\la)={k+1\choose 2}$ and
 $\la'=(k+1,1^{m-k-1})$. We thus obtain

\begin{align*}
s_{m-k,1^k}[1+q+\cdots+q^{\ell-2}]&=
q^ {k+1\choose 2}
{
(1-q^{\ell-1+0 })\cdots (1-q^{\ell-1 +m-k-1})(1-q^{\ell-1-1})\cdots (1-q^{\ell-1-k})
\over
(q;q)_k(1-q^m)(q;q)_{m-k-1}
}\\
&=
q^ {k+1\choose 2}
{ (q^{\l-k-1},q)_m
\over
(q;q)_k(1-q^m)(q;q)_{m-k-1}
}
\end{align*}
See the illustration above where  the statistics $c(x)$ and $h(x)$ are computed for the hook partition $(m-k,1^k)$
\sa
\noindent
Notice next that we have
\begin{align*}
\Big[{  m -1\atop k}\Big]_q
\Big[{m +\ell(\mu) -(k+2)\atop m}\Big]_q &\ses
{1
\over
(q;q)_{k}(q;q)_{m-1-k}
}
\ess
{(q;q)_{m +\ell(\mu) -(k+2)}
\over
(1-q^m)(q;q)_{\ell(\mu) -(k+2)}}
\\
&\ses
{(q^{\l-k-1},q)_m
\over
(q;q)_{k}(1-q^m)(q;q)_{m-1-k}
}
\end{align*}

\noindent
To prove that for $\nu=(m-k,1^k)$
(\ref{eq:thmGenSymSideExp}) reduces to (\ref{eq:equivalentPoly}), we need only verify the equality
\[
q^{m }
s_{m-k,1^k}\big[\tttt{1-q^{\l -1}\over  1-q }\big]\ses q^{m +{k+1\choose2}}
\Big[{  m -1\atop k}\Big]_q
\Big[{m +\ell  -(k+2)\atop m}\Big]_q
\]
However, the above calculations show exactly that.
\end{proof}

Theorem \ref{thm:genSymSideExp} provides an expansion of the symmetric function side in terms of the\\ $\{P_\mu(X,q^{-1})\}$ basis.  We now seek an appropriate ``combinatorial side'' by expanding the same symmetric function in terms of the $\{P_\mu(X,q)\}$ basis.  In order to do this, we will use a special evaluation given in the following theorem.

\begin{thm}\label{thm:specEval}
\begin{equation}\label{eq:thmSpecEval}
s_\nu\Big[\tttt{1-q^{j-1}\over  1-q }\Big]
\ses
\sum_{k=\l(\nu)}^{|\nu|}
\sum_{\multi{|\rho|=|\nu|\cr
\l(\rho)=k}}
\,\,
{K_{\nu,\rho}(q)
\over
\prod_{i=1}^m
(q;q)_{m_i(\rho)}}
q^{n(\rho)}
{(q;q)_{j-1}
\over (q;q)_{j-1-k}}
\end{equation}
\end{thm}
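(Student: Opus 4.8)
The plan is to expand $s_\nu$ in the power‑sum–like building block $\frac{1-q^{j-1}}{1-q}=1+q+\cdots+q^{j-2}$, recognizing this as the plethystic substitution of $s_\nu$ at the alphabet $1+q+\cdots+q^{j-2}$, and then to re‑sum the resulting expression over monomials indexed by partitions $\rho$. Concretely, first I would invoke the principal specialization formula for Schur functions already used in the preceding corollary, namely $s_\nu[1+q+\cdots+q^{j-2}] = q^{n(\nu)}\qbin{j-1}{\nu'}{q}$; but rather than using that closed form directly I would instead expand $s_\nu$ in the monomial (or, more usefully, the Hall–Littlewood $P$) basis. The cleanest route is to write $s_\nu = \sum_{\rho} K_{\nu,\rho}(q)\, P_\rho[X;q]$ (the inverse Kostka–Foulkes relation, valid at the level of symmetric functions), and then apply the plethystic evaluation $X \mapsto \frac{1-q^{j-1}}{1-q}$ to both sides. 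This reduces the theorem to computing $P_\rho\big[\frac{1-q^{j-1}}{1-q};q\big]$ for each $\rho$ with $|\rho|=|\nu|$.

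The key computation is therefore the specialization of a Hall–Littlewood polynomial at the geometric‑series alphabet. Here I would use the classical formula for the principal specialization of $P_\rho[X;q]$: one has $P_\rho\big[\frac{1-q^{N}}{1-q};q\big]$ equal to a product that, after simplification, has the form $q^{n(\rho)}\,\frac{(q;q)_{N}}{(q;q)_{N-\ell(\rho)}}\cdot\frac{1}{\prod_i (q;q)_{m_i(\rho)}}$ when $\ell(\rho)\le N$, and vanishes otherwise. (This is the Hall–Littlewood analogue of the statement that $s_\lambda[1+\cdots+q^{N-1}]$ vanishes when $\ell(\lambda)>N$; it follows from the explicit product formula for $P_\rho$ specialized at a finite geometric progression, e.g. Macdonald III.2, or can be derived from the Cauchy identity for Hall–Littlewood polynomials specialized one variable at a time.) Plugging $N = j-1$ gives exactly the factor $\frac{(q;q)_{j-1}}{(q;q)_{j-1-k}}$ with $k=\ell(\rho)$, together with the normalization $q^{n(\rho)}/\prod_i (q;q)_{m_i(\rho)}$ appearing in the statement. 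Grouping the sum over $\rho$ by $\ell(\rho)=k$ and noting $K_{\nu,\rho}(q)=0$ unless $\ell(\nu)\le \ell(\rho)\le |\nu|$ produces precisely the right‑hand side of (\ref{eq:thmSpecEval}).

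I expect the main obstacle to be pinning down the Hall–Littlewood specialization formula in exactly the normalization the authors use — in particular matching the $q$‑powers $q^{n(\rho)}$ and the denominators $\prod_i (q;q)_{m_i(\rho)}$, which depend on whether one uses the $P_\rho$ or $Q_\rho$ normalization and on the convention for $\widetilde H_\mu$ versus $P_\mu$. A safe way to sidestep convention headaches is to combine the two facts (\ref{eq:fact1fromGHRY}) and (\ref{eq:fact2fromGHRY}) already quoted in the proof of Theorem \ref{thm:genSymSideExp} with the substitution $X\mapsto \frac{1-q^{j-1}}{1-q}$: since $Q_\mu[X;q]=\sum_\lambda s_\lambda[X(1-q)]K_{\lambda,\mu}(q)$ and $s_\lambda\big[\frac{1-q^{j-1}}{1-q}(1-q)\big] = s_\lambda[1-q^{j-1}]$, which is nonzero only for single columns and has a trivially evaluable form, one can compute $Q_\mu\big[\frac{1-q^{j-1}}{1-q};q\big]$ directly, convert to $P_\mu$ via (\ref{eq:fact2fromGHRY}), and then re‑expand $s_\nu$ in the $P$ basis. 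This keeps every step inside the conventions already fixed in the paper, so the only remaining work is the routine $q$‑binomial bookkeeping to collect the stated factors. Finally I would double‑check the identity at $\nu$ a single row and a single column (where it collapses to the $q$‑binomial theorem) as a sanity check before recording the general argument.
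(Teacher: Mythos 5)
Your main route is exactly the paper's: expand $s_\nu=\sum_{\rho}K_{\nu,\rho}(q)P_\rho[X;q]$, specialize at $X=\frac{1-q^{j-1}}{1-q}$, evaluate $P_\rho$ there via the Hall--Littlewood principal specialization (the paper phrases this as the Macdonald reciprocity $H_\rho[1-u;q]=q^{n(\rho)}\prod_{s=1}^{\ell(\rho)}(1-u/q^{s-1})$ with $u=q^{j-1}$, which is your formula since $H_\rho[X;q]=Q_\rho\big[\frac{X}{1-q};q\big]$ and $Q_\rho=\prod_i(q;q)_{m_i(\rho)}P_\rho$), then group by $\ell(\rho)=k$ and use that $K_{\nu,\rho}(q)=0$ unless $\nu\ge\rho$, hence $k\ge\ell(\nu)$. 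One small slip in your side remark only: $s_\lambda[1-q^{j-1}]$ is nonzero for all hook shapes $\lambda=(a,1^b)$, not just single columns; but you don't rely on that alternate route, so the argument as proposed matches the paper.
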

\begin{proof}
Recall that from \cite{Mac}, we get the identity
\[
s_\nu[X]\ses \sum_{\rho\part |\nu|}K_{\nu,\rho}(q)P_\rho[X,q],
\]
which can be also written as
\[
s_\nu[X]\ses \sum_{\rho\part |\nu|}K_{\nu,\rho}(q){Q_\rho[X,q]\over \prod_{i=1}^m
(q;q)_{m_i(\rho)}}
\]
and $X\RA X/(1-q)$ gives
\[
s_\nu\big[\tttt{X\over 1-q}\big]\ses \sum_{\rho\part|\nu|}H_\rho[X ;q] { K_{\nu,\rho}(q)\over
\prod_{i=1}^m
(q;q)_{m_i(\rho)}}.
\]
Now the replacement $X\RA 1-q^{j-1}$
yields
\[
s_\nu\big[\tttt{ 1-q^{j-1}\over 1-q}\big]\ses \sum_{\rho\part|\nu|}H_\rho[ 1-q^{j-1} ;q] { K_{\nu,\rho}(q)\over
\prod_{i=1}^m
(q;q)_{m_i(\rho)}}
\]
This can be rewritten in the form
\begin{equation}\label{eq:thmSpecEvalRew}
s_\nu\big[\tttt{1-q^{j-1}\over  1-q }\big]
\ses
\sum_{k=1}^{|\nu|}
\sum_{\multi{|\rho|=|\nu|\cr
\l(\rho)=k}}
\,\,H_\rho[1-q^{j-1};q ]
{K_{\nu,\rho}(q)
\over
\prod_{i=1}^m
(q;q)_{m_i(\rho)}}.
\end{equation}
Now, the Macdonald reciprocity in the Hall-Littlehood case yields
\[
H_\rho[1-u;q ]\ses q^{n(\rho)}
\prod_{s=1}^{\l(\rho)}
(1-u/q^{ s-1 }  ).
\]
In particular, the replacement $u\RA q^{j-1}$ gives (for  $\l(\rho)=k$)
\[
H_\rho[1-q^{j-1};q ]\ses q^{n(\rho)}
\prod_{s=1}^{k}
(1- q^{j -s }  )\ses q^{n(\rho)}(1-q^{j-k})\cdots (1-q^{j-1})
\]
Thus (\ref{eq:thmSpecEvalRew}) becomes
\[
s_\nu\big[\tttt{1-q^{j-1}\over  1-q }\big]
\ses
\sum_{k=1}^{|\nu|}
\sum_{\multi{|\rho|=|\nu|\cr
\l(\rho)=k}}
\,\,
{K_{\nu,\rho}(q)
\over
\prod_{i=1}^m
(q;q)_{m_i(\rho)}}
q^{n(\rho)}
{(q;q)_{j-1}
\over (q;q)_{j-1-k}}
\]
Since the coefficient $K_{\nu,\rho}(q)$ fails to vanish only when  $\nu\ge\rho$ in dominance,
the hypothesis $\l(\rho)=k$ forces $\l(\nu)\le k$. This given we can write
\[
s_\nu\big[\tttt{1-q^{j-1}\over  1-q }\big]
\ses
\sum_{k=\l(\nu)}^{|\nu|}
\sum_{\multi{|\rho|=|\nu|\cr
\l(\rho)=k}}
\,\,
{K_{\nu,\rho}(q)
\over
\prod_{i=1}^m
(q;q)_{m_i(\rho)}}
q^{n(\rho)}
{(q;q)_{j-1}
\over (q;q)_{j-1-k}}
\]
\end{proof}

\vskip -.35in
Now Theorem \ref{thm:genSymSideExp} gives that our symmetric function side has the expansion
\[
LHS_{\nu,n}[X,q]
\ses
q^{|\nu| } \sum_{\mu\part n}
s_\nu\big[\tttt{1-q^{\l(\mu)-1}\over  1-q }\big]
q^{-n(\mu)}
(q;q)_{\l(\mu) }
  P_\mu(X,q^{-1}).
\]
Using Theorem \ref{thm:specEval}, this can be rewritten as
\[
q^{|\nu| } \sum_{\mu\part n}
\sum_{k=\l(\nu)}^{|\nu|}
\sum_{\multi{|\rho|=|\nu|\cr
\l(\rho)=k}}
\,\,
{K_{\nu,\rho}(q)
\over
\prod_{i=1}^m
(q;q)_{m_i(\rho)}}
q^{n(\rho)}
{(q;q)_{\l(\mu)-1}
\over (q;q)_{\l(\mu)-1-k}}
q^{-n(\mu)}
(q;q)_{\l(\mu) }
  P_\mu(X,q^{-1}),
\]
or better,
\[
q^{|\nu| }
\sum_{k=\l(\nu)}^{|\nu|}(q;q)_k
\sum_{\multi{|\rho|=|\nu\cr
\l(\rho)=k}}
\,\,
{K_{\nu,\rho}(q)
\over
\prod_{i=1}^m
(q;q)_{m_i(\rho)}}
q^{n(\rho)}\,
\sum_{\mu\part n}\Big[{\l(\mu)-1\atop k}\Big]_q
q^{-n(\mu)}
(q;q)_{\l(\mu) }
  P_\mu(X,q^{-1}).
\]

Recall that in \cite{GHRY}, for the classical case of the Delta conjecture at $t=0$, we proved the identity
\[
\sum_{\mu\part n}q^{- n(\mu)}
P_\mu[X;q^{-1}]
\Big[{ \l(\mu)-1\atop k }\Big]_q
(q;q)_{\l(\mu)}
\ses q^{-k(k+1)}
(q;q)_{k+1}\uu\uu\sum_{\multi{\mu\part n\, ;\, \l(\mu)=k+1}}\uu\uu
q^{n(\mu)}
 P_\mu[X;q]
\]
This permits us to obtain
the expansion of the symmetric function side in terms of the basis
$\big\{P_\mu[X;q]\big\}_\mu$ and use our recipe to obtain what we would label as the ``combinatorial side''.  Namely,
\begin{align*}
RHS_{\nu,n}[X,q]
\ses&
q^{|\nu| }
\sum_{k=\l(\nu)}^{|\nu|}(q;q)_k
\sum_{\multi{|\rho|=|\nu\cr
\l(\rho)=k}}
\,\,
{K_{\nu,\rho}(q)
\over
\prod_{i=1}^m
(q;q)_{m_i(\rho)}}
q^{n(\rho)}\,\times
\\
&\bigsp\bigsp
\times \,
q^{-k(k+1)}
(q;q)_{k+1}\uu\uu\sum_{\multi{\mu\part n\, ;\, \l(\mu)=k+1}}\uu\uu
q^{n(\mu)}
 P_\mu[X;q]
\end{align*}

Additionally, the last sum appearing in ${\it RHS}_{\nu,n}[X,q]$ was proved to have a hook Schur function expansion in \cite{GHRY}.
We have thus proved the following generalization of (\ref{eq:symSideEqualsCombSide}).
\begin{thm}It is not only true that
\begin{equation}\label{eq:thmGeneralSymSideEqualsCombSide}
{\it LHS}_{\nu,n}[X,q]\ses {\it RHS}_{\nu,n}[X,q],
\end{equation}
but also that the Schur expansion of both sides contains only hook
Schur functions.
\end{thm}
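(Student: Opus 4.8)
The plan is to establish the two assertions of the theorem separately, but both follow almost immediately from the machinery already assembled in the excerpt. First, for the identity \eqref{eq:thmGeneralSymSideEqualsCombSide}, I would observe that the entire point of the preceding development is that $RHS_{\nu,n}[X,q]$ is \emph{defined} as the expansion of $LHS_{\nu,n}[X,q]$ in the basis $\{P_\mu[X;q]\}_\mu$. Thus the equality is not something to be proved from scratch; rather, one must verify that the explicit formula displayed for $RHS_{\nu,n}[X,q]$ (with the $K_{\nu,\rho}(q)$, the $(q;q)_k$, the $q^{n(\rho)}$ factors, and the inner sum over $\mu$ with $\l(\mu)=k+1$) is genuinely the same symmetric function as the one obtained by starting from Theorem~\ref{thm:genSymSideExp}, applying Theorem~\ref{thm:specEval} to rewrite $s_\nu[\tttt{1-q^{\l(\mu)-1}\over 1-q}]$, collecting the $q$-binomial $\big[{\l(\mu)-1\atop k}\big]_q$, and then invoking the classical-case identity from \cite{GHRY} that converts $\sum_{\mu\part n} q^{-n(\mu)}P_\mu[X;q^{-1}]\big[{\l(\mu)-1\atop k}\big]_q(q;q)_{\l(\mu)}$ into $q^{-k(k+1)}(q;q)_{k+1}\sum_{\l(\mu)=k+1} q^{n(\mu)}P_\mu[X;q]$. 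Since every one of these steps is displayed in the text immediately before the theorem, the proof of the first assertion is just the remark that this chain of equalities is exactly what was carried out.

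Second, for the hook Schur function claim, the key input is the sentence in the excerpt stating that ``the last sum appearing in ${\it RHS}_{\nu,n}[X,q]$ was proved to have a hook Schur function expansion in \cite{GHRY}.'' Concretely, for each fixed $k$ the inner object $(q;q)_{k+1}\sum_{\mu\part n;\,\l(\mu)=k+1} q^{n(\mu)} P_\mu[X;q]$ is, up to the scalar $q^{-k(k+1)}$, precisely the right-hand side of \eqref{eq:25fromGHRY} with $k$ replaced by $k+1$, and \cite{GHRY} established that this symmetric function expands only in hook Schur functions $s_{(a,1^b)}$. Since $RHS_{\nu,n}[X,q]$ is a $\Q(q)$-linear combination (over $k$ ranging from $\l(\nu)$ to $|\nu|$, and over $\rho$ with $|\rho|=|\nu|$, $\l(\rho)=k$) of these individually hook-expandable symmetric functions, the whole of $RHS_{\nu,n}[X,q]$ lies in the span of hook Schur functions. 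By the identity \eqref{eq:thmGeneralSymSideEqualsCombSide} just proved, $LHS_{\nu,n}[X,q]$ has the same hook-only Schur expansion, and this confirms the original experimental observation about the polynomial in \eqref{eq:SymSide}.

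The main (and essentially only) obstacle is bookkeeping: one must be careful that the shift $k\mapsto k+1$ is applied consistently, that the coefficients $K_{\nu,\rho}(q)/\prod_i (q;q)_{m_i(\rho)}$ and the powers of $q$ are tracked correctly through the substitution $X\mapsto X(1-q)$ and through the passage from the $\{P_\mu[X;q^{-1}]\}$ basis to the $\{P_\mu[X;q]\}$ basis, and that the lower limit $k=\l(\nu)$ of the outer sum is legitimate — this last point is exactly the observation in the proof of Theorem~\ref{thm:specEval} that $K_{\nu,\rho}(q)\neq 0$ forces $\nu\ge\rho$ in dominance, hence $\l(\nu)\le\l(\rho)=k$. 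Beyond this careful matching of formulas, no new ideas are required: both halves of the statement are corollaries of results already in hand — the algebraic manipulations of this section plus the structural results of \cite{GHRY} — so I would present the proof as a short synthesis rather than a fresh argument.
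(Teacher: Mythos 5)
Your proposal is correct and matches the paper's own reasoning: the displayed chain (Theorem~\ref{thm:genSymSideExp}, then Theorem~\ref{thm:specEval}, then the classical-case $P$-basis identity from \cite{GHRY}) is precisely what the paper takes as the proof of the equality, and the hook-Schur claim is, as you say, inherited from the fact that each fixed-$k$ inner sum is (up to a $\Q(q)$-scalar) the right side of~\eqref{eq:25fromGHRY} with $k\mapsto k+1$. You have also correctly noted the one subtle point — that the lower limit $k=\l(\nu)$ is justified by the dominance constraint on $K_{\nu,\rho}(q)$ — so no gaps remain.
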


\begin{remark}
The right hand side of this identity is none one other than the Rhodes-Shimozono ``combinatorial side'' transformed to our set up, (see the righthand side of  Theorem 1.2 in \cite{HagRhoShi}).
\end{remark}

\begin{remark}
In \cite{GHRY} (see Lemma 4.2) it is shown that
$$
h_n[X(1-u)] \ses (1-u)\sum_{s=0}^{n-1}(-u)^s
s_{(n-s,1^s)}[X]
$$
It follows from this identity that any symmetric polynomial whose Schur functions expansion contains only hook Schur functions may be expanded as  linear combination of the shifted Cauchy  kernel $h_n(X(1-q^i)]$. What forced Remmel to restrict himself to $\Delta_{s_\nu} e_n$ in the hook case of $\nu$ is that in the hook case the needed coefficients are products of $q$-analogues of integers. This facilitated conjecturing their exact nature. With the wisdom of hindsight we can now explain Haglund result as due to the fact that Schur function expansions of  the appropriately modified polynomials
$\Delta_{s_\nu}e_n$ contain only hook Schur functions in full generality. However, this
circumstance is only an artifact of the specialization at $t=0$. In fact, without this specialization, computer data reveals the dimension of the space spanned by the polynomials $\Delta_{s_\nu}e_n$ to be much larger than $n$.
The data suggests that,  more likely,  this dimension is the number of partitions of $n$.
\end{remark}

\section{Acknowledgements}

The authors want to  express their gratitude  to Jim Haglund for making his result  and his argument available to us, (unpublished manuscript). We are also grateful to Dennis Stanton for providing us with the tools we needed to be able to prove (in section 3) Remmel's $q$-binomial conjectures.

\end{document}